\newtheorem{Theorem}{Theorem}[section]
\newtheorem{Lemma}[Theorem]{Lemma}
\newtheorem{Example}[Theorem]{Example}
\newtheorem{Remark}[Theorem]{Remark}
\newtheorem{Def}[Theorem]{Definition}
\newenvironment{Proof*}{{\it Proof.}}
\newcommand{\ZZ}{\mathbb{Z}}
\newcommand{\BB}{\mathcal{B}}
\newcommand{\DEF}[1]{\emph{#1}}
\newcommand{\pat}{{\rm patt}}
\newcommand{\MD}{{\rm dim_M}}
\newcommand{\diam}[1]{{\rm diam}(#1)}
\begin{document}

\title{The metric dimension of the zero-divisor graph of a matrix semiring}
\author{David Dol\v zan}
\date{\today}

\address{D.~Dol\v zan:~Department of Mathematics, Faculty of Mathematics
and Physics, University of Ljubljana, Jadranska 21, SI-1000 Ljubljana, Slovenia; e-mail: 
david.dolzan@fmf.uni-lj.si}

 \subjclass[2010]{}
 \keywords{semiring, zero-divisior, graph, metric dimension}
  \thanks{The author acknowledges the financial support from the Slovenian Research Agency  (research core funding No. P1-0222)}

\bigskip

\begin{abstract} 
We determine the metric dimension of the zero-divisor graph of the matrix semiring over a commutative entire antinegative semiring.
\end{abstract}

\maketitle 
%\parindent=0cm

%-----------------------------------------------------
%-----------------------------------------------------
\section{Introduction}
%-----------------------------------------------------
%-----------------------------------------------------

\bigskip

One of the most important and active areas in algebraic combinatorics is the study
of graphs associated with different algebraic structures. This field has attracted many researchers during the past 30 years. 

One of the most basic concepts in the study of (semi)rings is the notion of a zero-divisor. Thus, in 1988, Beck \cite{beck88} first introduced the concept of the zero-divisor graph of a commutative ring. In 1999, Anderson and Livingston \cite{andliv99} made a slightly different definition of the zero-divisor graph in order to be able to investigate the zero-divisor structure of commutative rings. 
In 2002, Redmond \cite{redmond02} extended this definition to also include  non-commutative rings. Different authors then further extended this concept to semigroups  \cite{dem02} and nearrings \cite{cann05}. The zero-divisor graphs of semirings have been studied in \cite{atani08, dolzan, dolzan212}.

For an ordered subset $W=\{ w_1,w_2,\ldots,w_k\}$ of the vertex set of graph $G$ and a vertex $v$ of $G$, the $k$-vector
$r(v|W)=(d(v,w_1),d(v,w_2),\ldots,d(v,w_k))$ is called the \DEF{representation} of $v$ with respect to $W$. A set $W$ is called a \DEF{resolving set} for $G$ if distinct vertices of $G$ have distinct representations with respect to $W$. A resolving set of minimal cardinality for $G$ is a \DEF{basis} of $G$ and the cardinality of the basis is called the \DEF{metric dimension} of $G$, denoted by $\MD(G)$ \cite{chartrand}. The concept of a metric dimension was introduced by Slater \cite{slater} while studying the problem of uniquely determining the location of an intruder in a network. The metric dimension was then studied by Harary and Melter \cite{harary} and it has appeared in various applications of graph theory, for example pharmaceutical chemistry \cite{cameron, chartrand}, robot navigation in space \cite{khuller1996}, combinatorial optimization \cite{sebo} and sonar and coast guard long range navigation \cite{slater, slater2002}.
It turns out that determining the metric dimension of a graph remains a NP-complete problem even in special cases like bounded-degree planar graphs \cite{diaz}, or split graphs, bipartite graphs and their complements and line graphs of bipartite graphs \cite{epstein}.

The metric dimensions of graphs corresponding to various algebraic structures have been studied in many different settings. The metric dimension of a zero-divisor graph of a commutative ring was studied in \cite{ou21, pirzada, pirzada1, redmond21}, a total graph of a finite commutative ring in \cite{dolzan1}, an annihilating-ideal grah of a finite ring in \cite{dolzan21}, a commuting graph of a dihedral group in \cite{faisal}, etc.

In this paper, we study the metric dimension of the zero-divisor of a commutative entire antinegative semiring. A \emph{semiring} is a set $S$ equipped with binary operations $+$ and $\cdot$ such that $(S,+)$ is a commutative monoid 
with identity element 0 and $(S,\cdot)$ is a monoid with identity element 1. In addition, operations $+$ and $\cdot$
are connected by distributivity and 0 annihilates $S$. A semiring is \emph{commutative} if $ab=ba$ for all $a,b \in S$.

The theory of semirings has many applications in optimization theory, automatic control, models of discrete event networks and graph theory (see e.g. \cite{baccelli, cuninghame, li2014, zhao2010}).  For an extensive theory of semirings, we refer the reader to \cite{hebisch}. There are many natural examples of commutative semirings, for example, the set of nonnegative integers (or reals) with the usual operations of addition and multiplication. Other examples include distributive lattices, tropical semirings, dio\"{\i}ds, fuzzy algebras, inclines and bottleneck algebras. 
A semiring $S$ is called \emph{entire} if $ab=0$ for some $a, b \in S$ implies $a=0$ or $b=0$ and it is called \emph{antinegative} or \emph{zero-sum-free}, if $a+b=0$ for some $a, b \in S$ implies that $a=b=0$. Antinegative semirings are also called \emph{antirings}. The simplest example of an antinegative semiring  is the binary Boolean semiring $\BB$, the set $\{0,1\}$ in which addition and multiplication are the same as in $\ZZ$ except that $1+1=1$.

This paper is organized as follows. In the next section, we examine some preliminary graph-theoretical notions and results and state a preparatory lemma. In Section 3, we calculate the metric dimension of the zero-divisor graph of $M_n(\BB)$ (see Theorem \ref{metricboolean}). In the last section, we use this result to calculate the metric dimension of the zero-divisor graoh of $M_n(S)$, where $S$ is an arbitrary commutative entire antinegative semiring (see Theorem \ref{boolean}). 

\bigskip

%-----------------------------------------------------
%-----------------------------------------------------

\bigskip
\section{Preliminaries}
\bigskip

For a semiring $S$, we denote by $Z(S)$ the set of zero-divisors in $S$, $Z(S)=\{x \in S;$ there exists
$0 \neq y \in S \text { such that } xy=0 \text { or } yx=0 \}$. 
We denote by $\Gamma(S)$ the \emph{zero-divisor graph} of $S$. 
The vertex set $V(\Gamma(S))$ of $\Gamma(S)$ is the set of elements in $Z(S) \setminus \{0\}$ and 
an unordered pair of vertices $x,y \in V(\Gamma(S))$, $x \neq y$, is an edge 
$x-y$ in $\Gamma(S)$ if $xy = 0$ or $yx=0$.
The sequence of edges $x_0 - x_1$, $ x_1 - x_2$, ..., $x_{k-1} - x_{k}$ in a graph is called \emph{a path of length $k$}. The \DEF{distance} between vertices $x$ and $y$ is the length of the shortest
path between them, denoted by $d(x,y)$. 
The \DEF{diameter} $\diam{\Gamma}$ of the graph $\Gamma$ is the longest distance between any two vertices of the graph.

Next, we shall also need the following definition.

\bigskip

\begin{Def}
Let $v$ be a vertex of a graph $G$. Then the \emph{open neighbourhood} of $v$ is $N(v)=\{u \in V(G); \text{ there exists an edge } uv \text { in G}\}$ and the \emph{closed neighbourhood} of $v$ us $N[v]=N(v) \cup \{v\}$.  
Two distinct vertices $u$ and $v$ of $G$ are \emph{twins} if $N(u)=N(v)$ or $N[u]=N[v]$.
\end{Def}

\bigskip

The following lemma can be found in \cite{hernando}.

\bigskip

\begin{Lemma}[\cite{hernando}]\label{twins}
Suppose $u$ and $v$ are twins in a connected graph $G$ nad the set $W$ is a resolving set for $G$. Then $u$ or $v$ is in $W$.
\end{Lemma}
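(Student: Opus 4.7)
The plan is to prove this by contradiction: assume that $W$ is a resolving set and that neither $u$ nor $v$ lies in $W$, and then show that $r(u|W) = r(v|W)$, contradicting the assumption that $W$ resolves $G$. Concretely, it suffices to establish the claim that whenever $u$ and $v$ are twins, $d(u,w) = d(v,w)$ for every vertex $w \in V(G) \setminus \{u,v\}$, since by assumption $W \subseteq V(G) \setminus \{u,v\}$.

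To prove the claim, I would split into the two cases of the twin definition. In the open-neighbourhood case $N(u) = N(v)$, take a shortest $u$--$w$ path $u = x_0 - x_1 - \cdots - x_k = w$ of length $k = d(u,w)$. Since $w \neq v$, we have $k \geq 1$, so $x_1 \in N(u) = N(v)$, and therefore $v - x_1 - x_2 - \cdots - x_k$ is a walk of length $k$ from $v$ to $w$. Hence $d(v,w) \leq d(u,w)$, and the symmetric argument yields equality.

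The closed-neighbourhood case $N[u] = N[v]$ requires a bit more care, since here $u$ and $v$ may themselves appear on shortest paths. Note first that in this case $u$ and $v$ must be adjacent: $v \in N[v] = N[u]$ and $v \neq u$ forces $uv$ to be an edge. Now take a shortest path $u = x_0 - x_1 - \cdots - x_k = w$ with $w \notin \{u,v\}$. If $x_1 = v$, then $v - x_2 - \cdots - x_k = w$ is a walk of length $k-1$, so $d(v,w) \leq k-1 \leq k$. If $x_1 \neq v$, then $x_1 \in N(u) \subseteq N[u] = N[v]$, and since $x_1 \neq v$ we conclude $x_1 \in N(v)$, so $v - x_1 - \cdots - x_k$ is a walk of length $k$. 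Either way $d(v,w) \leq d(u,w)$, and by symmetry equality holds.

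I do not expect a real obstacle here; the only subtlety is to notice that in the closed-neighbourhood case $u$ or $v$ can appear as the second vertex on a shortest path, which is why the case split on whether $x_1 = v$ is needed. Once the key claim is established, the contradiction is immediate, since $r(u|W)$ and $r(v|W)$ agree coordinatewise.
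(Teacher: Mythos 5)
Your proof is correct. Note that the paper itself gives no argument for this lemma at all --- it is quoted from the reference \cite{hernando} --- so there is no in-paper proof to compare against; your self-contained argument is the standard one and fills that gap cleanly. The key claim (twins are equidistant from every other vertex) and the case split on open versus closed neighbourhoods are exactly what is needed, and your handling of the closed case, where $u$ and $v$ are adjacent and $v$ may occur as the second vertex of a shortest path, is the one genuine subtlety and you treat it correctly. Two cosmetic remarks: in the open case the hypothesis $k\ge 1$ follows from $w\neq u$ rather than $w\neq v$ (both hold since $w\notin\{u,v\}$), and in that same case you could note that $x_1=v$ is automatically impossible, since $v\in N(u)=N(v)$ would make $v$ adjacent to itself; neither point affects the validity of the argument.
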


\bigskip

In this paper, we mostly concern ourselves with the semirings of matrices. For a semiring $S$, we denote by $M_n(S)$ the semiring of all $n$ by $n$ matrices with entries in $S$. We shall denote by $E_{ij} \in M_n(S)$ the matrix with $1$ at entry $(i,j)$ and zeros elsewhere.  
For a matrix $A \in M_n(S)$, we shall denote by $A_{ij} \in S$ the $(i,j)$-th entry of $A$.
Furthermore, let $N_n$ denote the
set $\{1,2,\ldots,n\}$. 

\bigskip

We shall see that the semiring of Boolean matrices $M_n(\BB)$ plays a special role in the search for the metric dimension of the zero-divisor graph of an entire antiring. 
Thus, the next section studies the metric dimension of  $\Gamma(M_n(\BB))$.

\bigskip
\bigskip

\section{The metric dimension of the zero-divisor graph of Boolean matrices}
\bigskip

In this section we study the metric dimension of  $\Gamma(M_n(\BB))$. Let's start with the simplest case of two by two matrices.

\begin{Example}
\label{twobytwo}
Let $W= \left\{ \left[ \begin{matrix}1 & 0 \\
1 & 0 \end{matrix}\right], \left[ \begin{matrix}1 & 1 \\
0 & 0 \end{matrix}\right]   \right\} \subset M_2(\BB)$. It can be easily verified that $W$ is a resolving set for $\Gamma(M_2(\BB))$, so $\MD(\Gamma(M_2(\BB))) \leq 2$. Since $\Gamma(M_2(\BB))$ has $8$ vertices and Theorem 1 from \cite{dolzan} states that $\diam{\Gamma(M_2(\BB))} \leq 3$, we conclude that no set of cardinality $1$ can be a resolving set for $\Gamma(M_2(\BB))$, therefore $\MD(\Gamma(M_2(\BB))) = 2$.
\end{Example}

\bigskip

Now, we turn on to the case $n \geq 3$. For every $I, J  \subseteq N_n$ we define the set of all matrices with their zero rows and columns prescribed by $I$ and $J$ respectively by
$T_{I,J}=\{A \in M_n(\BB); A_{ik}=A_{kj}=0 \text{ for every } k \in N_n, \text { for every } s \notin I \text { there exists } t \in N_n \text { such that } A_{st} \neq 0 \text { and for every }  u \notin J \text { there exists } v \in N_n \text { such that } A_{vu} \neq 0 \}$. Also, we denote by $t_{I,J}= |T_{I,J}|$ the cardinality of set $T_{I,J}$. 

The next two lemmas investigate the values of $t_{I,J}$ for various sets $I, J \subseteq N_n$.

\bigskip

\begin{Lemma}
\label{tgredonmanj2}
If $|I|=n-1$ or $|J|=n-1$ then $t_{I, J}=1$.
\end{Lemma}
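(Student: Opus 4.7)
The plan is to exploit the row/column symmetry of the definition and reduce to the case $|I|=n-1$. The defining conditions of $T_{I,J}$ are symmetric under $A \mapsto A^T$ together with swapping $I$ and $J$: rows indexed by $I$ become columns indexed by $I$ after transposing, and the ``existence of a nonzero entry'' clauses swap accordingly. Hence I may assume $|I| = n-1$ and let $s$ be the unique index in $N_n \setminus I$; the case $|J|=n-1$ then follows by applying the argument to $A^T$.

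With this assumption in hand, the core observation is that any $A \in T_{I,J}$ has at most one nonzero row. Indeed, every row indexed by $i \in I$ is identically zero, so all nonzero entries of $A$ must lie in row $s$. Now I read off row $s$ from the column constraints: for each $u \in J$, the fact that column $u$ vanishes forces $A_{su} = 0$; for each $u \in N_n \setminus J$, the requirement that column $u$ contains a nonzero entry, combined with the fact that all rows other than $s$ are zero, forces $A_{su} = 1$ (the only nonzero element of $\BB$). Thus the entries of $A$ are completely prescribed, and $A$ is unique if it exists.

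It remains to check existence, i.e.\ that the candidate matrix defined by these entries actually lies in $T_{I,J}$. By construction, its rows indexed by $I$ and its columns indexed by $J$ are zero, and every column $u \notin J$ contains the nonzero entry $A_{su} = 1$. The only remaining clause is that row $s$ must be nonzero, which holds as soon as $J \neq N_n$; this is the implicit hypothesis under which the statement $t_{I,J} = 1$ is meaningful (if $J = N_n$ and $|I|=n-1$, the set $T_{I,J}$ would be empty). Accepting this, we conclude $t_{I,J} = 1$.

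There is no serious obstacle: the argument is essentially a bookkeeping of the defining conditions, and the only care needed is to keep track of the ``witness'' clauses so that the single candidate matrix is verified to satisfy them, and to invoke transposition cleanly to handle the symmetric case $|J|=n-1$.
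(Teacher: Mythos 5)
Your proof is correct and follows essentially the same route as the paper: reduce to the case $|I|=n-1$, observe that only the row indexed by the unique $s\notin I$ can be nonzero, and note that the entries of that row are then completely forced by $J$. You are somewhat more careful than the paper in verifying that the forced candidate actually satisfies all the defining clauses and in flagging the degenerate case $J=N_n$, but this is elaboration rather than a different argument.
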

\begin{proof}
Suppose $|I|=n-1$. Then any matrix $A \in T_{I,J}$ has only one non-zero row. In this row, the zero elements are exactly determined by the set $J$, so $T_{I,J}$ has exactly one element. Similary, we reason in the case $|J|=n-1$.
\end{proof}

\bigskip

\begin{Lemma}
\label{numberoft}
For every $I, J  \subsetneq N_n$, we have $t_{I,J}=\sum\limits_{k=0}^{n-|J|}{(-1)^k{n-|J| \choose k}\left(2^{n-|J|-k}-1 \right)^{n-|I|}}$.
\end{Lemma}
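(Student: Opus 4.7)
The plan is to reduce to a standard inclusion-exclusion count. First, I would observe that the conditions defining $T_{I,J}$ say precisely that the rows indexed by $I$ vanish, the columns indexed by $J$ vanish, and every other row and column is nonzero. Consequently the only freedom is in the submatrix obtained by deleting the rows in $I$ and the columns in $J$: writing $m = n-|I|$ and $\ell = n-|J|$, one has $t_{I,J} = f(m,\ell)$, where $f(m,\ell)$ denotes the number of $m\times\ell$ Boolean matrices with no zero row and no zero column.

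Next, I would compute $f(m,\ell)$ by inclusion-exclusion on the set of columns forced to be zero. The number of $m\times\ell$ Boolean matrices in which every row is nonzero (with no condition on columns) is $(2^\ell-1)^m$, since each row ranges independently over the $2^\ell-1$ nonzero vectors of $\BB^\ell$. More generally, for any $K\subseteq\{1,\dots,\ell\}$ with $|K|=k$, the number of such matrices in which additionally every column indexed by $K$ is zero equals $(2^{\ell-k}-1)^m$, because deleting those columns gives a bijection with $m\times(\ell-k)$ matrices having no zero row. Inclusion-exclusion over the subset of columns that happen to be zero then gives
$$f(m,\ell)=\sum_{k=0}^{\ell}(-1)^k\binom{\ell}{k}(2^{\ell-k}-1)^m,$$
and substituting $m=n-|I|$, $\ell=n-|J|$ yields the claimed identity.

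There is essentially no technical obstacle here beyond setting up the inclusion-exclusion correctly; the hypothesis $I,J\subsetneq N_n$ guarantees $m,\ell\geq 1$, so the degenerate $k=\ell$ term contributes $0^m=0$, in agreement with the impossibility of every column being zero while some row is nonzero.
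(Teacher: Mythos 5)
Your proof is correct and follows essentially the same route as the paper: reduce $t_{I,J}$ to counting $(n-|I|)\times(n-|J|)$ Boolean matrices with no zero row or column, then apply inclusion--exclusion over the set of zero columns among matrices whose rows are all nonzero. Your write-up is in fact slightly cleaner in making the inclusion--exclusion over subsets $K$ of columns explicit, but there is no substantive difference in method.
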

\begin{proof}
Notice that $t_{I,J}$ is exactly equal to the number of matrices of size $(n-|I|) \times (n - |J|)$ with entries in $\BB$ that have no zero row or column. 
So, let us examine the set of $(n-|I|) \times (n - |J|)$ matrices with elements in $\BB$. Obviously, there are exactly $(2^{n-|J|}-1)^{n-|I|}$ matrices that have all rows nonzero, but naturally some of them may have a few zero columns. Suppose therefore that a matrix has at least $k$ zero columns for some $k \in N_{n-|J|}$. We have $n-|J| \choose k$ possible ways to choose these $k$ columns. But if we disregard the zero columns, there are $2^{n-|J|-k}-1$ possible ways to choose the remaining elements in every (nonzero) row.  Since there are $n-|I|$ rows, this yields $(2^{n-|J|-k}-1)^{n-|I|}$ matrices. Now, if we sum this over all possible $k$, we will have counted some matrices (with more than $k$ zero columns) multiple times, but the inclusion exclusion principle then yields that there are exactly
$\sum_{k=0}^{n-|J|}{{n-|J| \choose k}(-1)^k(2^{n-|J|-k}-1)^{n-|I|}}$ matrices of size $(n-|I|) \times (n - |J|)$ with entries in $\BB$ that have no zero rows or columns.
\end{proof}

\bigskip

\begin{Remark}
\begin{enumerate}
\item
There is no known closed formula for the expression of Lemma \ref{numberoft} in the general case, even in the special case of square matrices (see \cite{seq}).
\item
Obviously, the number $t_{I,J}$ is only dependant on the cardinalites of sets $I$ and $J$. Therefore for any $1 \leq i,j \leq n-1$, we can define 
$t_{i, j}=t_{\{1,\ldots,i\},\{1,\ldots,j\}}$.
\end{enumerate}
\end{Remark}

\bigskip

The next lemma shows that all elements in $T_{I,J}$ are twins.

\bigskip

\begin{Lemma}
\label{twinsresolving}
For every $I, J  \subsetneq N_n$, all elements of the set $T_{I,J}$ are twins in $\Gamma(M_n(\BB))$.
\end{Lemma}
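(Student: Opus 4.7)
The plan is to characterize the open neighborhood of every $A\in T_{I,J}$ purely in terms of the pair $(I,J)$, after which twinhood of any two elements of $T_{I,J}$ will follow immediately. The whole argument rests on the antinegativity of $\BB$.

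First I would compute, for $A\in T_{I,J}$ and an arbitrary $B\in M_n(\BB)$, exactly when $AB=0$. Since $\BB$ is antinegative, the Boolean sum $\sum_j A_{ij}B_{jk}$ vanishes if and only if every summand $A_{ij}B_{jk}$ vanishes, so $AB=0$ is equivalent to the assertion that for every $j\in N_n$, either column $j$ of $A$ is zero or row $j$ of $B$ is zero. For $A\in T_{I,J}$ the zero columns of $A$ are precisely those indexed by $J$, so the condition reduces to: the nonzero rows of $B$ are contained in $J$. A symmetric computation yields $BA=0$ if and only if the nonzero columns of $B$ are contained in $I$. Setting
\[
C_{I,J}=\{B\in V(\Gamma(M_n(\BB))):\text{nonzero rows of }B\subseteq J\text{ or nonzero columns of }B\subseteq I\},
\]
I then obtain $N(A)=C_{I,J}\setminus\{A\}$ for every $A\in T_{I,J}$, a set depending only on $(I,J)$.

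Next I would pick two distinct matrices $A,A'\in T_{I,J}$ and note that any element of $T_{I,J}$ has nonzero rows $N_n\setminus I$ and nonzero columns $N_n\setminus J$, so the membership $A\in C_{I,J}$ reduces to the single condition $I\cup J=N_n$, and the same criterion holds for $A'$. If $I\cup J\neq N_n$, then neither $A$ nor $A'$ lies in $C_{I,J}$, so $N(A)=C_{I,J}=N(A')$ and the two vertices are open twins. If $I\cup J=N_n$, then both $A$ and $A'$ lie in $C_{I,J}$, which means $A$ and $A'$ are adjacent and $N[A]=C_{I,J}=N[A']$, making them closed twins. Either way, $A$ and $A'$ are twins, as required.

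The main obstacle is the initial characterization of when $AB=0$; once this is in place, the case split is routine. The key conceptual point is that antinegativity eliminates all cancellation in matrix products, so the set of matrices annihilating $A$ on either side is determined solely by the zero-pattern of $A$.
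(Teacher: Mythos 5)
Your proof is correct and follows essentially the same route as the paper: both arguments use antinegativity to show that the left and right annihilators of $A \in T_{I,J}$ depend only on the pair $(I,J)$ (namely, $AB=0$ iff the nonzero rows of $B$ lie in $J$, and $BA=0$ iff the nonzero columns of $B$ lie in $I$). Your version is in fact slightly more careful than the paper's, since you explicitly handle the case $I\cup J=N_n$ where elements of $T_{I,J}$ annihilate one another and are therefore closed rather than open twins — a distinction the paper's one-line conclusion glosses over.
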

\begin{proof}
Choose $A \in T_{I,J}$ and observe that $AB=0$ for $B \in M_n(\BB)$ if and only if $B \in T_{K, L}$ for some $K, L \subseteq N_n$ with $N_n \setminus J \subseteq K$, and $CA=0$ for $C \in M_n(\BB)$ if and only if $C \in T_{K,L}$ for some $K, L \subseteq N_n$ with $N_n \setminus I \subseteq L$. Thus, all matrices in $T_{I,J}$ have the same neighbours in $\Gamma(M_n(\BB))$. 
\end{proof}

\bigskip

%\begin{Remark}
%\label{onlytwins}
%Note that by Lemma \ref{twinsresolving} every resolving set for $\Gamma(M_n(\BB))$ has to contain all but perhaps one element from every set $T_{I,J}$.
%\end{Remark}

\bigskip

The next task is to give a construction of a resolving set for $\Gamma(M_n(\BB))$. We proceed thusly. 
For every $\emptyset \subsetneq I,J \subsetneq N_n$, let $X_{I,J}$ denote an arbitrarily chosen element of $T_{I,J}$. Also,   for every $\emptyset \subsetneq I \subsetneq N_n$ with $|I| \leq n-2$, let $Y_{I, \emptyset}$ denote  an arbitrarily chosen element of $T_{I, \emptyset}$ and  let $Z_{\emptyset, I}$ denote  an arbitrarily chosen element of $T_{\emptyset,I}$ .
Now, define $R$ with
$R= \left(\bigcup\limits_{\emptyset \subsetneq I,J \subsetneq N_n}{X_{I,J}}\right) \cup \left(\bigcup\limits_{\emptyset \subsetneq I \subsetneq N_n \& |I| \leq n-2}{Y_{I, \emptyset} \cup Z_{\emptyset, I}}\right) \cup T_{N_{n-1},\emptyset} \cup T_{\emptyset, N_{n-1}}$. 
Finally, define 
$W_R = \left( \bigcup\limits_{\emptyset \subseteq I,J \subsetneq N_n \& I \cup J \neq \emptyset}{T_{I,J}} \right) \setminus R$.
We shall prove that $W_R$ is a resolving set for $\Gamma(M_n(\BB))$ for every choice of the elements in set $R$. Before we do that, let us examine its cardinality.

\bigskip

\begin{Lemma}
\label{mocwr}
 $|W_R|=2(n-1)+\sum\limits_{i,j=0 \& ij \neq 0}^{n-2}{n \choose i}{n \choose j} \left[\sum\limits_{k=0}^{n-j}{(-1)^k{n-j \choose k}\left(2^{n-j-k}-1 \right)^{n-i}}-1\right]$.
\end{Lemma}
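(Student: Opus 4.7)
The plan is a direct counting argument, exploiting two structural facts: every nonzero matrix in $M_n(\BB)$ belongs to exactly one set $T_{I,J}$ (its zero-row and zero-column profile being uniquely determined), and, by the remark following Lemma~\ref{numberoft}, $|T_{I,J}|$ depends only on $|I|$ and $|J|$. Hence the union defining $W_R$ is a disjoint union of the $T_{I,J}$, and
\[
|W_R| = \sum_{\substack{\emptyset \subseteq I, J \subsetneq N_n \\ I \cup J \neq \emptyset}} |T_{I,J} \cap W_R|,
\]
so it suffices to determine how many elements of each stratum $T_{I,J}$ survive the removal of $R$.

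Next, I would carry out the case analysis according to the stratum. For $\emptyset \subsetneq I, J \subsetneq N_n$, the single element $X_{I,J}$ is removed, leaving $t_{|I|, |J|} - 1$ elements in $W_R$. For $\emptyset \subsetneq I \subsetneq N_n$ with $|I| \leq n-2$, the construction of $R$ absorbs the entire set $T_{I, \emptyset}$ (via $Y_{I, \emptyset}$), and symmetrically $T_{\emptyset, I}$ (via $Z_{\emptyset, I}$), so the contribution is $0$; the same is true of the distinguished pair $T_{N_{n-1}, \emptyset}$ and $T_{\emptyset, N_{n-1}}$, which $R$ contains wholesale. The remaining $(n-1)$-subsets $I \neq N_{n-1}$ give sets $T_{I, \emptyset}$ disjoint from $R$; by Lemma~\ref{tgredonmanj2} each such set has exactly one element, and there are ${n \choose n-1} - 1 = n-1$ of them, for a total contribution of $n-1$, with an analogous $n-1$ coming from the column-only sets.

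Collecting contributions yields
\[
|W_R| = \sum_{i=1}^{n-1} \sum_{j=1}^{n-1} {n \choose i}{n \choose j} (t_{i,j} - 1) + 2(n-1).
\]
To obtain the stated form, I would apply Lemma~\ref{tgredonmanj2}, which gives $t_{i,n-1} = t_{n-1,j} = 1$, so every term of the double sum with $i = n-1$ or $j = n-1$ vanishes; the double sum therefore restricts to $1 \leq i, j \leq n-2$. Substituting the explicit formula for $t_{i,j}$ from Lemma~\ref{numberoft} then produces the bracketed expression in the claim.

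I expect no deep obstacle; the proof is essentially bookkeeping. The only place that requires care is the asymmetric treatment of the single-sided strata (those with $J = \emptyset$ or $I = \emptyset$), where one must distinguish $|I| \leq n-2$ (fully absorbed into $R$, contributing $0$) from $|I| = n-1$ (absorbed only when $I = N_{n-1}$, and otherwise contributing a single twin element to $W_R$). This boundary behaviour is precisely what generates the $2(n-1)$ summand in the formula.
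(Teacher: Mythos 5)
Your stratification of $Z(M_n(\BB))\setminus\{0\}$ into the disjoint sets $T_{I,J}$, and your treatment of the strata with $|I|=n-1$ or $|J|=n-1$ (producing the $2(n-1)$ term), follow the paper's own argument. However, there is a genuine error in your handling of the single-sided strata: $Y_{I,\emptyset}$ and $Z_{\emptyset,I}$ are defined as \emph{arbitrarily chosen elements} of $T_{I,\emptyset}$ and $T_{\emptyset,I}$, i.e.\ single matrices, not those entire sets. So for $\emptyset\subsetneq I\subsetneq N_n$ with $|I|\le n-2$, the set $R$ removes exactly one element from $T_{I,\emptyset}$, and that stratum contributes $t_{|I|,0}-1$ (not $0$) to $|W_R|$; likewise $T_{\emptyset,J}$ contributes $t_{0,|J|}-1$. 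These contributions are strictly positive for $n\ge 3$ (for $n=3$, for instance, $t_{1,0}=t_{0,1}=25$), so your final formula $\sum_{i,j=1}^{n-2}\binom{n}{i}\binom{n}{j}(t_{i,j}-1)+2(n-1)$ undercounts $|W_R|$. The correct count is $\sum_{0\le i,j\le n-2,\ (i,j)\neq(0,0)}\binom{n}{i}\binom{n}{j}(t_{i,j}-1)+2(n-1)$, which is how the condition ``$ij\neq 0$'' in the statement must be read: it mirrors the condition ``$I\cup J\neq\emptyset$'' in the definition of $W_R$, i.e.\ $(i,j)\neq(0,0)$, and not ``both $i$ and $j$ nonzero.'' For $n=3$ the discrepancy is $202$ versus your $58$.

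A quick sanity check would have exposed the misreading: each $T_{I,\emptyset}$ is a class of twins (Lemma \ref{twinsresolving}), so by Lemma \ref{twins} any resolving set must contain all but at most one of its elements; if $W_R$ really omitted all of $T_{I,\emptyset}$ for some $I$ with $|T_{I,\emptyset}|\ge 2$, Theorem \ref{resolvingboolean} could not hold. Once you correct the reading of $Y_{I,\emptyset}$ and $Z_{\emptyset,I}$, the rest of your bookkeeping --- disjointness of the strata, dependence of $|T_{I,J}|$ only on $(|I|,|J|)$, the vanishing of the terms with $i=n-1$ or $j=n-1$ via Lemma \ref{tgredonmanj2}, and the $2(n-1)$ boundary count coming from the $(n-1)$-subsets other than $N_{n-1}$ --- goes through and reproduces the paper's proof.
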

\begin{proof}
Observe firstly that by the construction of $R$, for any set $I \subseteq N_n$ with $|I|=n-1$, we have $T_{\emptyset,I} \subset R$ if and only if $T_{I, \emptyset} \subset R$ if and only if $I=N_{n-1}$. This implies that there are $2(n-1)$ sets of the form  $T_{\emptyset,I}$ and $T_{\emptyset,I}$ with $|I|=n-1$ that are subsets of $W_R$. Note that in this case  $|T_{\emptyset,I}|=|T_{\emptyset,I}|=1$ by  Lemma \ref{tgredonmanj2}, hence we have the first $2(n-1)$ elements in $W_R$.
Notice next that $R$ contains exactly one element from $T_{I,J}$ for every $\emptyset \subseteq I,J \subsetneq N_n$ with $I \cup J \neq \emptyset$ and $|I|, |J| \leq n-2$ and that there are exactly ${n \choose i}{n \choose j}$ different possible pairs of sets $I$ and $J$ in $N_n$ with $|I|=i$ and $|J|=j$. Lemma \ref{numberoft} now concludes this proof.
\end{proof}

\bigskip

\begin{Remark}
\label{kolikojedvojckov}
Note that by Lemmas \ref{twins} and \ref{twinsresolving} every resolving set for $\Gamma(M_n(\BB))$ has to contain all but perhaps one element from every set $T_{I,J}$.
This, by a similar calculation as in the proof of Lemma \ref{mocwr} yields $\sum\limits_{i,j=0 \& ij \neq 0}^{n-2}{n \choose i}{n \choose j} \left[\sum\limits_{k=0}^{n-j}{(-1)^k{n-j \choose k}\left(2^{n-j-k}-1 \right)^{n-i}}-1\right] = |W_R| - 2(n-1)$ distinct elements that belong to each and every resolving set of $\Gamma(M_n(\BB))$.
\end{Remark}

\bigskip

In order to find the metric dimension of $\Gamma(M_n(\BB))$ we shall also need the following two lemmas.

\bigskip

\begin{Lemma}
\label{distancetwo}
Suppose that $A, B \in M_n(\BB)$ are zero-divisors such that $A \in T_{I_A,J_A}$ for some sets $I_A,J_A \in N_n$ with $|I_A|, |J_A|=1$.
Then $d(A,B) \leq 2$.
\end{Lemma}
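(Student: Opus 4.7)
The plan is to establish, for each zero-divisor $B$, either that $A$ and $B$ are adjacent, or that they share a common neighbour in $\Gamma(M_n(\BB))$. Write $I_A = \{i_0\}$ and $J_A = \{j_0\}$, so that row $i_0$ is the unique zero row of $A$ and column $j_0$ the unique zero column of $A$. A preliminary observation I would establish is that any nonzero zero-divisor in $M_n(\BB)$ must have at least one zero row or one zero column: if $B$ had all rows and all columns nonzero, then in the Boolean computation $BD = 0$ would force every row of $D$ to vanish (since each column of $B$ is nonzero), and similarly $DB = 0$ would force $D = 0$. Hence $B$ lies in some $T_{I_B, J_B}$ with $(I_B, J_B) \neq (\emptyset, \emptyset)$.

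Assume first that $I_B \neq \emptyset$, pick any $i \in I_B$, and set $C = E_{j_0, i}$. The two products $AC = 0$ and $CB = 0$ reduce to one-line verifications: in $AC$, only column $j_0$ of $A$ is selected, and that column is zero; in $CB$, only row $i$ of $B$ is selected, and that row is zero by the choice of $i \in I_B$. The matrix $C$ is itself a (nonzero) zero-divisor since it has $n-1 \geq 2$ zero rows, so $C$ is a vertex of $\Gamma(M_n(\BB))$. Because $n \geq 3$, the matrix $A$ has at least $n - 1 \geq 2$ nonzero rows while $C$ has only one, hence $C \neq A$. If $C = B$ then $AB = 0$ and $A - B$ is already an edge; otherwise $A - C - B$ is a path of length $2$.

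The remaining case $I_B = \emptyset$ forces $J_B \neq \emptyset$, and the symmetric construction $C = E_{i, i_0}$ for some $i \in J_B$ yields $CA = 0$ and $BC = 0$ by a parallel computation, so again $d(A,B) \leq 2$. I do not anticipate a real obstacle here: the hypothesis $|I_A| = |J_A| = 1$ leaves $A$ with a rich supply of annihilators (every nonzero matrix supported on row $j_0$ alone, or on column $i_0$ alone), and this supply is large enough to meet every other zero-divisor in at most one intermediate step; the subtlest point is merely choosing which of the two symmetric constructions to apply, which is dictated by whether $I_B$ or $J_B$ is nonempty.
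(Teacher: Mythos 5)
Your proof is correct and follows essentially the same route as the paper's: for an index $k$ in $I_B$ (resp.\ $J_B$) you use the rank-one matrix $E_{j_0,k}$ (resp.\ $E_{k,i_0}$) as a common neighbour of $A$ and $B$, which is exactly the paper's construction. The extra details you supply (why every nonzero zero-divisor has a zero row or column, and the degenerate cases $C=A$ or $C=B$) are correct and harmless.
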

\begin{proof}
Denote $I_A=\{i\}$ and $J_A=\{j\}$. Since $B$ is a zero-divisor, $B \in T_{I_B,J_B}$ for some $I_B, J_B \subseteq N_n$ and there exists $k \in N_n$ such that $k \in I_B$ or $k \in J_B$. If $k \in I_B$ then $E_{jk}B=AE_{jk}=0$, so $d(A,B) \leq 2$. Similarly, if $k \in J_B$ then $BE_{ki}=E_{ki}A=0$ and again $d(A,B) \leq 2$.
\end{proof}

\begin{Lemma}
\label{distancethree}
Suppose that $A, B \in M_n(\BB)$ are zero-divisors such that either
\begin{enumerate}
\item
 $A \in T_{I_A,\emptyset}$ and $B \in T_{I_B,\emptyset}$ for some sets $I_A, I_B \subsetneq N_n$ with $I_A \cap I_B = \emptyset$, or
\item
 $A \in T_{\emptyset, J_A}$ and $B \in T_{\emptyset, J_B}$ for some sets $J_A, J_B \subsetneq N_n$ with $J_A \cap J_B = \emptyset$.
\end{enumerate}
Then $d(A,B) = 3$.
\end{Lemma}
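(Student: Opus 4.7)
The plan is to prove case (1); case (2) is entirely analogous, applied to transposes. I first claim $I_A, I_B$ are nonempty. Indeed, the row--Boolean viewpoint developed below will show that if $A$ has neither a zero row nor a zero column, then each of $AX = 0$ and $XA = 0$ forces $X = 0$; since $A$ is a zero-divisor and $J_A = \emptyset$, this forces $I_A \neq \emptyset$, and likewise $I_B \neq \emptyset$. Pick $b \in I_A$ and $d \in I_B$; by disjointness $b \neq d$.

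For the upper bound $d(A,B) \leq 3$ I would exhibit the explicit path
\[ A \,-\, E_{bb} \,-\, E_{dd} \,-\, B. \]
Indeed $(E_{bb}A)_{k\ell} = [k=b]\,A_{b\ell} = 0$ since row $b$ of $A$ vanishes; symmetrically $E_{dd}B = 0$; and $E_{bb}E_{dd} = 0$ because $b \neq d$. The four matrices are pairwise distinct since $A$ and $B$ have no zero columns while $E_{bb}, E_{dd}$ each have $n-1$ of them.

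For the lower bound $d(A,B) \geq 3$, the key observation is that over $\BB$ the $i$-th row of a product $XY$ is the Boolean sum of rows of $Y$ indexed by the support of row $i$ of $X$. Applied to $A$: since $A$ has no zero column, the union of supports of its rows equals $N_n$, so $AC = 0$ forces every row of $C$ to be zero, i.e.\ $C = 0$. Hence any neighbour $C$ of $A$ in $\Gamma(M_n(\BB))$ must satisfy $CA = 0$, which in turn forces the support of every row of $C$ to lie in $I_A$; equivalently, every column of $C$ indexed outside $I_A$ is zero, i.e.\ $J_C \supseteq N_n \setminus I_A$. Symmetrically any neighbour $C$ of $B$ satisfies $J_C \supseteq N_n \setminus I_B$. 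Taking $C = B$ rules out adjacency of $A$ and $B$ since $J_B = \emptyset$ but $N_n \setminus I_A \neq \emptyset$; and a common neighbour of $A$ and $B$ would satisfy $J_C \supseteq (N_n \setminus I_A) \cup (N_n \setminus I_B) = N_n \setminus (I_A \cap I_B) = N_n$, whence $C = 0$, a contradiction. Thus $d(A,B) \geq 3$, and combined with the upper bound this yields $d(A,B) = 3$.

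The main obstacle is to keep the asymmetry between left and right annihilation straight: the hypothesis $J_A = J_B = \emptyset$ makes right-multiplication annihilation impossible for both $A$ and $B$, and it is precisely this observation that forces every neighbour of $A$ (and of $B$) to annihilate from the left in a column-controlled way, collapsing the common-neighbour analysis to the clean set-theoretic identity $(N_n \setminus I_A) \cup (N_n \setminus I_B) = N_n$.
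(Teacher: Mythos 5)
Your proof is correct and follows essentially the same strategy as the paper: the heart of the argument, the lower bound, is obtained exactly as in the paper's proof by showing that $AC=0$ or $BC=0$ forces $C=0$, that any neighbour $C$ of $A$ must therefore satisfy $CA=0$ and hence $J_C \supseteq N_n \setminus I_A$, and that a common neighbour would then force $J_C = N_n$, i.e.\ $C=0$. The only difference is minor but welcome: for the upper bound you construct the explicit path $A - E_{bb} - E_{dd} - B$ instead of invoking the diameter bound $\diam{\Gamma(M_n(\BB))} \leq 3$ from the cited reference, which makes your argument self-contained (and you also justify $I_A, I_B \neq \emptyset$, a point the paper leaves implicit).
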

\begin{proof}
Obviously it suffices to prove one of the two assertions, the other one can be done with a simetrical proof. Suppose therefore that $A \in T_{I_A,\emptyset}$ and $B \in T_{I_B,\emptyset}$ for some sets $I_A, I_B \subsetneq N_n$ with $I_A \cap I_B = \emptyset$. By \cite[Theorem 1]{dolzan}, $\diam{\Gamma(M_n(\BB))} \leq 3$. Since $AB \neq 0$ and $BA \neq 0$,  we only have to prove that $d(A,B) \neq 2$. Note that $AC, BC \neq 0$ for every $0 \neq C \in M_n(\BB)$. So, suppose there exists $C \in Z(M_n(\BB))$ such that $CA=CB=0$. But $CA=0$ implies $C \in T_{I_C, J_C}$ for some $I_C, J_C \subseteq N_n$ with $N_n \setminus I_A \subseteq J_C$. Similarly, $CB=0$ implies $N_n \setminus I_B \subseteq J_C$. But $I_A \cap I_B = \emptyset$, resulting in $J_C = N_n$, so $C=0$, a contradiction.
\end{proof}

\bigskip

We can now prove that $W_{R}$ is a resolving set for $\Gamma(M_n(\BB))$.

\bigskip

\begin{Theorem}
\label{resolvingboolean}
For any $n \geq 3$, the set $W_R$ is a resolving set for $\Gamma(M_n(\BB))$.
\end{Theorem}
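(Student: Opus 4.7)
The plan is to verify that for any two distinct vertices $u \neq v$ of $\Gamma(M_n(\BB))$, some probe $w \in W_R$ satisfies $d(u, w) \neq d(v, w)$. If $u \in W_R$ (the case $v \in W_R$ being symmetric), then $w = u$ itself separates the pair since $d(u,u) = 0 < d(v,u)$, so we reduce to the case $u, v \in R$. The construction of $R$ picks at most one element from each twin class $T_{I,J}$, with the two singleton classes $T_{N_{n-1}, \emptyset}$ and $T_{\emptyset, N_{n-1}}$ absorbed wholesale; so distinct $u, v \in R$ must lie in distinct twin classes $T_{I_u,J_u} \neq T_{I_v,J_v}$. By Lemma \ref{twinsresolving}, for any $w$ outside both of these classes, $d(u,w)$ depends only on $(I_u, J_u)$ and on the twin class of $w$, so it suffices to locate a probe in a well-chosen class meeting $W_R$.

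The first family of probes are the single-row matrices $P_p := \sum_{j=1}^{n} E_{pj}$, which form the singleton class $T_{N_n \setminus \{p\}, \emptyset}$, and the single-column matrices $Q_q := \sum_{i=1}^{n} E_{iq}$, forming $T_{\emptyset, N_n \setminus \{q\}}$. Using the identity $AB = 0$ iff $J_A \supseteq N_n \setminus I_B$, a direct computation shows that $P_p$ is adjacent to a vertex $A$ exactly when $p \in J_A$, and $Q_q$ is adjacent to $A$ exactly when $q \in I_A$. Moreover, $P_p \in W_R$ whenever $p \neq n$ (since then $N_n \setminus \{p\} \neq N_{n-1}$, and $R$ absorbs only $T_{N_{n-1}, \emptyset}$ among these singletons), and symmetrically $Q_q \in W_R$ for $q \neq n$. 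These $2(n-1)$ probes allow us to recover $I_A \cap N_{n-1}$ and $J_A \cap N_{n-1}$ from the distance vector, and thereby distinguish $u$ from $v$ whenever the two labels disagree somewhere on $N_{n-1}$.

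The residual case is when $(I_u, J_u)$ and $(I_v, J_v)$ agree on $N_{n-1}$ and differ only in whether $n$ lies in $I$ or in $J$. I would split this according to which of the two flags carries the discrepancy. In the sub-case $I_u = I_v \cup \{n\}$ with $J_u = J_v$ and $I_v \neq \emptyset$, take $w$ to be a non-representative element of $T_{\emptyset, N_{n-1} \setminus I_v}$: here $|N_{n-1} \setminus I_v| \leq n - 2$, so this class has at least two elements and $w \in W_R$; a straightforward check gives $wu = 0$ and $vw \neq 0 \neq wv$, whence $d(u, w) = 1 < d(v, w)$. In the boundary sub-case $I_v = \emptyset$ (so $v$ lies in $T_{\emptyset, J_v}$ with $J_v \neq \emptyset$), the above trick fails because $N_{n-1} \setminus I_v$ has cardinality $n - 1$; instead pick $w \in T_{\emptyset, L}$ with $L \cap J_v = \emptyset$ and $|L| \leq n - 2$, so that $w \in W_R$ and Lemma \ref{distancethree}(2) gives $d(v, w) = 3$, while the unique element $C$ of the singleton class $T_{N_n \setminus L, N_{n-1}}$ satisfies $Cu = wC = 0$, certifying $d(u, w) = 2$. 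The symmetric case where $J_u$ differs from $J_v$ in the $n$-flag is handled by transposition, and the case where both $I$ and $J$ differ at $n$ is treated by the same kind of probe construction aimed at one of the two discrepancies.

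The main obstacle is precisely this last step: in every sub-case where the two classes differ only in their $n$-flags, one must exhibit a probe twin class $T_{K,L}$ which (i) has at least one element in $W_R$, (ii) produces the desired distance discrepancy, and (iii) is distinct from $T_{I_u,J_u}$ and $T_{I_v,J_v}$. The hypothesis $n \geq 3$ leaves just enough slack in the cardinality bounds $|K|, |L| \leq n - 2$ for such a probe to be available in every sub-case; the collapse of these margins at $n = 2$ is why Example \ref{twobytwo} treats that dimension by hand.
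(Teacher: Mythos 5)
Your proof is correct, and it reaches the conclusion by a somewhat different case decomposition than the paper's. The paper normalizes to $I_A\neq I_B$ with $|I_A|\geq |I_B|$ and splits on $|I_A|\geq 2$ versus $|I_A|=1$, in each instance probing with an element of $T_{\emptyset,\,N_n\setminus I_A}$ (which lies in $W_R$ either because that class has at least two elements, or because it is a singleton $T_{\emptyset,J}$ with $|J|=n-1$ and $J\neq N_{n-1}$), and it invokes the distance-$2$-versus-$3$ argument of Lemma \ref{distancethree} only in the terminal case $I_B=\emptyset$. You instead fire the $2(n-1)$ singleton probes $P_p=\sum_j E_{pj}$ and $Q_q=\sum_i E_{iq}$ with $p,q\neq n$ first; since adjacency to $P_p$ (resp.\ $Q_q$) detects exactly $p\in J$ (resp.\ $q\in I$), these recover $I\cap N_{n-1}$ and $J\cap N_{n-1}$ and reduce everything to the residual case where the two labels differ only at the index $n$, which you then settle with probes that coincide with the paper's (an element of $T_{\emptyset,\,N_{n-1}\setminus I_v}=T_{\emptyset,\,N_n\setminus I_u}$ when $I_v\neq\emptyset$, and the Lemma \ref{distancethree} certificate when $I_v=\emptyset$). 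Your organization has the merit of making explicit what the $2(n-1)$ singleton classes retained in $W_R$ are actually for, and why only the indices in $N_{n-1}$ are needed; the paper's is a little shorter because the single probe family $T_{\emptyset,\,N_n\setminus I_A}$ covers the generic and singleton situations uniformly. The one place you only sketch --- the sub-case where both flags differ at $n$ --- does reduce to the $I$-discrepancy argument exactly as you claim, since the probes used there are insensitive to $J_u$ and $J_v$, so there is no gap.
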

\begin{proof}
Observe firstly that $Z(M_n(\BB)) \setminus \{0\} =  \bigcup\limits_{\emptyset \subseteq I,J \subsetneq N_n \& I \cup J \neq \emptyset}{T_{I,J}} $, so $Z(M_n(\BB)) \setminus \{0\} = W_R \cup R$.  Now, choose arbitrary $A, B \in R$. We have to prove that $A$ and $B$ have different representations with respect to $W_R$.
Denote by $I_A, J_A$ and $I_B, J_B$ the sets such that $A \in T_{I_A,J_A}$ and $B \in T_{I_B,J_B}$. By the definition of $R$, we have $I_A \neq I_B$ or $J_A \neq J_B$. Suppose without loss of generality that $I_A \neq I_B$. Now, we have $|I_A| \geq |I_B|$ or $|I_B| \geq |I_A|$, and again without loss of generality we can assume that $|I_A| \geq |I_B|$. 

We examine firstly the case that $|I_A| \geq 2$. We denote $J_C=N_n \setminus I_A \neq \emptyset$ and observe that 
$|J_C | \leq n-2$, thus $|T_{\emptyset,J_C}| \geq 2$. Therefore,
by the definition of $W_R$, there exists a matrix $C \in W_R$ such that $C \in T_{\emptyset,J_C}$. This implies that $CA=0$ and since $I_A \nsubseteq I_B$, we see that $CB \neq 0$. By definition, $C$ is not a right zero-divisor, so $d(A,C)=1$ and $d(B,C) \geq 2$ and therefore $A$ and $B$ have different representations with respect to $W_R$. 

Now, assume that $|I_A| \leq 1$. 
Since $I_A \neq I_B$, $I_A \neq \emptyset$, so it is left for us to examine the case $|I_A| = 1$. We have two possibilites, either $|I_B|=1$ or $I_B = \emptyset$. Assume first that $|I_B|=1$. Then $I_A \neq \{n\}$ or $I_B \neq \{n\}$. We can assume without loss of generality that $I_A \neq \{n\}$. 
Let us  denote $J_C=N_n \setminus I_A$ and observe that $|J_C|=n-1$ and $J_C \neq N_{n-1}$, so by the definition of $W_R$, there exists a matrix $C \in W_R$ such that $C \in T_{\emptyset,J_C}$. This implies that $CA=0$ and $CB \neq 0$. Since $C$ is not a right zero-divisor, we conclude that $d(A,C)=1$ and $d(B,C) \geq 2$, so $A$ and $B$ have different representations with respect to $W_R$. 
Finally, assume that $I_B = \emptyset$. Note that $J_B \neq N_n$, so choose any $k \notin J_B$ and observe that by the definition of $W_R$ and the fact that $n \geq 3$ there exists $C \in W_R$ such that $C \in T_{\emptyset, \{k\}}$. Now, Lemma \ref{distancethree} yields $d(B,C)=3$. Denote by $l$ the only element of $I_A$ and observe that $E_{kl}A=CE_{kl}=0$, so $d(A,C) \leq 2$, again concluding that $A$ and $B$ have different representations with respect to $W_R$. 
\end{proof}

\bigskip

We can now prove the following Theorem, which is the main result of this section.

\bigskip

\begin{Theorem}
\label{metricboolean}
For any $n \geq 2$, the metric dimension of $\Gamma(M_n(\BB))$ equals 
$$\MD(\Gamma(M_n(\BB)) =2(n-1)+\sum\limits_{i,j=0 \& ij \neq 0}^{n-2}{n \choose i}{n \choose j} \left[\sum\limits_{k=0}^{n-j}{(-1)^k{n-j \choose k}\left(2^{n-j-k}-1 \right)^{n-i}}-1\right].$$
\end{Theorem}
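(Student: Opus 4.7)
My plan is to verify the base case $n=2$ directly against Example \ref{twobytwo} (where the double sum is vacuous and both sides equal $2$), and for $n \geq 3$ to establish matching upper and lower bounds. The upper bound $\MD(\Gamma(M_n(\BB))) \leq |W_R|$ is immediate from Theorem \ref{resolvingboolean}, and Lemma \ref{mocwr} identifies $|W_R|$ with the closed expression in the statement, so all the real work lies in the lower bound.

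For the lower bound, the twin structure (Lemmas \ref{twins} and \ref{twinsresolving}) forces any resolving set $W$ to contain at least $|T_{I,J}|-1$ elements from each twin class $T_{I,J}$; summing over the non-singleton classes, the count carried out in Remark \ref{kolikojedvojckov} gives $|W| \geq |W_R| - 2(n-1)$. It therefore remains to account for the additional $2(n-1)$, which I plan to extract from the boundary singleton classes $T_{N_n \setminus \{k\}, \emptyset} = \{e_k\}$ and $T_{\emptyset, N_n \setminus \{k\}} = \{f_k\}$ for $k \in N_n$, where $e_k$ is the matrix whose $k$-th row is the all-ones vector and whose other entries are zero, and $f_k$ is its transpose.

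The core claim will be that any resolving set $W$ contains at least $n-1$ of the $e_k$ and at least $n-1$ of the $f_k$; adding these to the twin-forced total then yields $|W| \geq |W_R|$ and finishes the theorem. For the row-type half, I will argue by contradiction: supposing $e_k, e_\ell \notin W$ for some distinct $k, \ell$, the direct analysis of the Boolean matrix product gives the clean characterization $d(e_k,w) = 1 \iff k \in J_w$, $d(e_k,w) = 3 \iff w \in T_{\{k\},\emptyset}$, and $d(e_k,w) = 2$ otherwise (in analogy with Lemmas \ref{distancetwo} and \ref{distancethree}), with the symmetric description for $e_\ell$. Combining this with the twin-forced inclusions, I then plan to exhibit two vertices outside $W$ (typically $e_k$ paired with the unique twin-forced missing element of $T_{\{k\},\emptyset}$, or an analogous construction) whose distance profiles with respect to $W$ coincide, contradicting the resolving property. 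The column-type statement follows instantly via the transpose automorphism $A \mapsto A^T$ of $\Gamma(M_n(\BB))$, which interchanges $T_{I,J}$ with $T_{J,I}$ and maps $e_k$ to $f_k$.

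The main obstacle is implementing this contradiction without gaps. One must enumerate all four possible pairings of distance values in $\{1,2,3\}$ that a candidate distinguisher $w \in W$ can realize on the pair of missing representatives, and show case-by-case that each surviving $w$ is either a twin of an already-excluded vertex (hence absent by the twin-forced absence) or is itself a boundary singleton whose absence was assumed at the outset. I expect the bookkeeping to mirror, in reverse, the structural case split in the proof of Theorem \ref{resolvingboolean}, where exactly these boundary singletons were what allowed resolution of the hard pairs with $|I_A| = |I_B| = 1$.
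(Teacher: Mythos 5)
Your overall architecture coincides with the paper's: the $n=2$ base case via Example \ref{twobytwo}, the upper bound from Theorem \ref{resolvingboolean} combined with Lemma \ref{mocwr}, and a lower bound obtained by adding $2(n-1)$ forced elements to the twin count of Remark \ref{kolikojedvojckov}. The gap lies in how you propose to force those extra $2(n-1)$ elements. Your core claim --- that every resolving set must contain at least $n-1$ of the specific singletons $e_k\in T_{N_n\setminus\{k\},\emptyset}$ and at least $n-1$ of the $f_k\in T_{\emptyset,N_n\setminus\{k\}}$ --- is stronger than what can be established, and you do not prove it: the proposal only announces an intended case analysis (``I plan to exhibit\dots'', ``I expect the bookkeeping\dots''). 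The pairs that genuinely threaten to collide are the missing twin representatives $A\in T_{\{i\},\{j\}}$ and $B\in T_{\{i\},\{k\}}$ with $j\neq k$; since both are within distance $2$ of every vertex by Lemma \ref{distancetwo}, a vertex $C$ separates them exactly when it is adjacent to precisely one of them, which happens if and only if $I_C\in\{N_n\setminus\{j\},N_n\setminus\{k\}\}$ and $J_C\neq N_n\setminus\{i\}$. Nothing here forces $J_C=\emptyset$: the separator may be the singleton class $T_{N_n\setminus\{j\},J_C}$ with $J_C\neq\emptyset$ rather than $e_j$, or one may instead place \emph{all} of $T_{\{i\},\{j\}}$ into $W$. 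The paper's proof is built precisely around this freedom: it derives a four-way disjunction (all of $T_{I_A,J_A}$ in $W$, all of $T_{I_B,J_B}$ in $W$, or some element of $T_{N_n\setminus\{j\},J_C}$ resp.\ $T_{N_n\setminus\{k\},J_C}$ in $W$) and then extracts, by a separating-family count over the pairs $\{j,k\}$, at least $n-1$ \emph{additional} elements beyond the twin-forced ones, without ever identifying them with the $e_k$'s.

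Moreover, the contradiction you plan to reach from $e_k,e_\ell\notin W$ does not materialize along the route you sketch. Your candidate colliding pair, $e_k$ together with the missing element $A$ of $T_{\{k\},\emptyset}$, is always resolved by the twin-forced elements alone: for $n\geq 3$ the class $T_{\emptyset,\{k\}}$ has at least two elements, so some $w\in T_{\emptyset,\{k\}}$ lies in $W$ by Lemmas \ref{twins} and \ref{twinsresolving}, and then $we_k=0$ gives $d(e_k,w)=1$ while $wA\neq 0$ (because $N_n\setminus J_w=N_n\setminus\{k\}\nsubseteq \{k\}$) and $Aw\neq 0$ (because $A$ has no zero column) give $d(A,w)\geq 2$. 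So no collision arises there, and the assumed absence of two of the $e_k$'s yields no contradiction. To close the argument you would need to abandon the claim about the $e_k$'s themselves, prove instead the weaker four-option statement for each pair $T_{\{i\},\{j\}}$, $T_{\{i\},\{k\}}$ (and its row analogue), and then carry out the counting argument showing that these constraints, ranging over all pairs of column indices and all pairs of row indices, force at least $n-1+n-1$ distinct additional elements of $W$ --- which is exactly what the paper's proof of Theorem \ref{metricboolean} does.
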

\begin{proof}
If $n=2$, Example \ref{twobytwo} tells us that $\MD(\Gamma(M_2(\BB)) =2$, so the result holds in this case. So, let us from now on assume that $n \geq 3$.
%Note firstly that there are exactly $n$ different subsets $I \subsetneq N_n$ with $|I|=n-1$ and for every such $I$ we have $t_{I, \emptyset}=1$. Similarly, 
%there are exactly $n$ different subsets $J \subsetneq N_n$ with $|J|=n-1$ and for every such $J$ we have $t_{\emptyset, J}=1$. Together this gives us $2n$ distinct elements in $W_R$. 
%This implies that by the definition of $W_R$, together with the application of Lemmas \ref{tgredonmanj2} and \ref{numberoft}, we have $|W_R|=2n+\sum\limits_{i,j=0 \& ij \neq 0}^{n-2}{n \choose i}{n \choose j}(t_{i,j}-1)$.
By Lemma \ref{mocwr}, we have $|W_R|=2(n-1)+\sum\limits_{i,j=0 \& ij \neq 0}^{n-2}{n \choose i}{n \choose j} \left[\sum\limits_{k=0}^{n-j}{(-1)^k{n-j \choose k}\left(2^{n-j-k}-1 \right)^{n-i}}-1\right]$ and by Theorem \ref{resolvingboolean}, we have $\MD(\Gamma(M_n(\BB)) \leq |W_R|$. 
%This proves that
%$\MD(\Gamma(M_n(\BB)) \leq 2(n-1)+\sum\limits_{i,j=0 \& ij \neq 0}^{n-2}{n \choose i}{n \choose j} \left[\sum\limits_{k=0}^{n-j}{(-1)^k{n-j \choose k}\left(2^{n-j-k}-1 \right)^{n-i}}-1\right]$.

Now, we have to prove that we also have $\MD(\Gamma(M_n(\BB)) \geq |W_R|$. So, let $W$ be an arbitrary resolving set for $\Gamma(M_n(\BB))$.
%Lemma \ref{twins} assures us that $W$ contains all but perhaps one element from $T_{I,J}$ for every $0 \subset I,J \subsetneq N_n$ with $I \cup J \neq \emptyset$ and $|I|,|J| \leq n-2$.
Remark \ref{kolikojedvojckov} assures us that there are at least $\sum\limits_{i,j=0 \& ij \neq 0}^{n-2}{n \choose i}{n \choose j} \left[\sum\limits_{k=0}^{n-j}{(-1)^k{n-j \choose k}\left(2^{n-j-k}-1 \right)^{n-i}}-1\right]$ "twin" elements in $W$ (all but perhaps one element from $T_{I,J}$ for every $0 \subseteq I,J \subsetneq N_n$ with $I \cup J \neq \emptyset$ and $|I|,|J| \leq n-2$).
Now, choose $i, k \neq j  \in N_n$ and define $I_A=\{i\}, J_A=\{j\}, I_B=\{i\}, J_B=\{k\}$. Let us examine two arbitrary matrices $A \in T_{I_A,J_A}, B \in T_{I_B,J_B}$. If neither of $A$ and $B$ is in $W$, then Lemma \ref{distancetwo} implies that the only way for $A$ and $B$ to have different representations according to $W$ is that there exists $C \in W$ such that $C$ is a neighbour of $A$, but not $B$, or vice versa. But such $C$ has to be either an element of 
$T_{N_n\setminus\{j\},J_C}$ or $T_{N_n\setminus\{k\},J_C}$ for some $N_n\setminus\{i\} \neq J_C \subsetneq N_n$. So, we now have one of the following four possibilites: every element of $T_{I_A,J_A}$ is in $W$, every element of $T_{I_B,J_B}$ is in $W$, some element of $T_{N_n\setminus \{j\},J_C}$ is in $W$, or some element of $T_{N_n\setminus \{k\},J_C}$ is in $W$. In every case, we see that $W$ has to contain one additional element to the "twin" elements described above. We can reason similarly in case $I_A=\{i\}, J_A=\{k\}, I_B=\{j\}, J_B=\{k\}$ for any $i \neq j, k \in N_n$, yielding again one of the four options: every element of $T_{I_A,J_A}$ is in $W$, every element of $T_{I_B,J_B}$ is in $W$, some element of $T_{I_C,N_n\setminus \{i\}}$ is in $W$, or some element of $T_{I_C,N_n\setminus \{j\}}$ is in $W$ (for some $N_n \setminus \{k\} \neq I_C \subsetneq N_n$). We can easily see that in this way, we must have at least $2(n-1)$ (distinct) additional elements in $W$ (one for any two distinct rows and one for any two distinct columns), thus
$|W| \geq 2(n-1)+\sum\limits_{i,j=0 \& ij \neq 0}^{n-2}{n \choose i}{n \choose j} \left[\sum\limits_{k=0}^{n-j}{(-1)^k{n-j \choose k}\left(2^{n-j-k}-1 \right)^{n-i}}-1\right]$, which proves the theorem. 
\end{proof}

\bigskip

\bigskip
\bigskip

\section{The general case}
\bigskip

Now, we examine the metric dimension of the  zero-divisor graph of $M_n(S)$ for an arbitrary commutative entire antinegative semiring $S$. We shall need the following definition.

\bigskip

\begin{Def}
Let $S$ be an entire antinegative semiring and $A \in M_n(S)$. Then the \emph{pattern} of $A$ is the matrix
$\pat(A) \in M_n(S)$ such that  for every $i, j \in N_n$ we have $\pat(A)_{ij}=1$ if and only if $A_{ij} \neq 0$ and $\pat(A)_{ij}=0$ otherwise.
\end{Def}

\bigskip

\begin{Lemma}
\label{pattwins}
Let $S$ be an entire antiring. For every $0 \neq A \in Z(M_n(S))$ matrices $A$ and $\pat(A)$ are twins in $\Gamma(M_n(S))$.
\end{Lemma}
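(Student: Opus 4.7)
The plan is to exploit the fact that over a commutative entire antinegative semiring, whether a matrix product vanishes depends only on the zero-patterns of the factors. Concretely, $(AB)_{ik}=\sum_j A_{ij}B_{jk}$ is a sum in an antinegative semiring, so it equals $0$ if and only if every summand equals $0$; by entireness each product $A_{ij}B_{jk}$ vanishes iff $A_{ij}=0$ or $B_{jk}=0$. Thus the condition $AB=0$ is equivalent to a purely combinatorial condition on the supports of $A$ and $B$. Since $\pat(A)$ has the same support as $A$, I obtain the basic equivalence $AB=0 \iff \pat(A)B=0$, and symmetrically $BA=0 \iff B\pat(A)=0$, for every $B \in M_n(S)$.

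From this equivalence it follows immediately that $\pat(A)$ is a non-zero zero-divisor (apply the equivalence to any witness $B$ for $A$), so $\pat(A)$ is a vertex of $\Gamma(M_n(S))$. Assuming $A \neq \pat(A)$ (otherwise the statement is vacuous), the claim that $A$ and $\pat(A)$ are twins reduces to showing either $N(A)=N(\pat(A))$ or $N[A]=N[\pat(A)]$, which will follow from the basic equivalence almost directly, except for the subtle question of whether $A$ itself lies in the neighbourhood of $\pat(A)$.

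I would handle this with a short case split. If $A\pat(A)=0$ or $\pat(A)A=0$, then by the basic equivalence $A$ and $\pat(A)$ are adjacent, and I claim $N[A]=N[\pat(A)]$: for any vertex $C$, the condition ``$C=A$ or $AC=0$ or $CA=0$'' rewrites, using the basic equivalence together with the adjacency, to ``$C=\pat(A)$ or $\pat(A)C=0$ or $C\pat(A)=0$''. Otherwise $A\pat(A)\neq 0$ and $\pat(A)A\neq 0$, so $A$ and $\pat(A)$ are non-adjacent, and I claim $N(A)=N(\pat(A))$: for any $C \notin \{A,\pat(A)\}$ membership in either open neighbourhood is controlled by the basic equivalence, while the exceptional cases $C=A$ and $C=\pat(A)$ are excluded from both open neighbourhoods by the non-adjacency.

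The main obstacle is not algebraic but bookkeeping: one must choose the correct flavour of twin (open vs.\ closed) depending on whether $A$ and $\pat(A)$ are adjacent, and verify that the exceptional points $C=A$ and $C=\pat(A)$ do not break the equality of the chosen neighbourhoods. Beyond this, everything is a direct consequence of the support-only characterization of vanishing matrix products over an entire antiring.
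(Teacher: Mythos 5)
Your proof is correct and follows the same underlying idea as the paper, which simply asserts that the claim ``follows directly from the fact that $S$ is an entire antinegative semiring''; the support-only characterization of vanishing products ($AB=0$ iff for all $i,j,k$ either $A_{ij}=0$ or $B_{jk}=0$, by antinegativity and entireness) is exactly the content the paper leaves implicit. Your additional bookkeeping about open versus closed twins depending on whether $A$ and $\pat(A)$ are adjacent is a careful and accurate elaboration of what the paper takes for granted.
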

\begin{proof}
This follows directly from the fact that $S$ is an entire antinegative semiring.
\end{proof}

\bigskip

\begin{Lemma}
\label{booleantwins}
Let $S$ be an entire antiring, $\Lambda \subseteq N_n \times N_n$ and $\alpha_{ij}, \beta_{ij} \in S \setminus \{0\}$ for all $(i,j) \in \Lambda$. If matrices $\sum_{(i,j) \in \Lambda}{\alpha_{ij}E_{ij}}$ and $\sum_{(i,j) \in \Lambda}{\beta_{ij}E_{ij}}$ are non-zero zero-divisors, they are twins in $\Gamma(M_n(S))$.
\end{Lemma}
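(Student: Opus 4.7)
Write $A = \sum_{(i,j) \in \Lambda} \alpha_{ij} E_{ij}$ and $B = \sum_{(i,j) \in \Lambda} \beta_{ij} E_{ij}$. The plan is to exploit the fact that, because $S$ is entire and antinegative, the vanishing of a product $AD$ for $D \in M_n(S)$ depends only on the support $\Lambda$, not on the specific nonzero entries $\alpha_{ij}$ or $\beta_{ij}$. The $(i,k)$-entry of $AD$ equals $(AD)_{ik} = \sum_{(i,j) \in \Lambda} \alpha_{ij} D_{jk}$. Since $S$ is entire and each $\alpha_{ij} \neq 0$, each summand vanishes iff $D_{jk} = 0$; since $S$ is antinegative, the full sum vanishes iff every summand does. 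Hence $(AD)_{ik} = 0$ iff $D_{jk} = 0$ for every $j$ with $(i,j) \in \Lambda$, a condition referring only to $\Lambda$ and $D$. The analogous computation for $B$ yields the same condition, so $AD = 0$ iff $BD = 0$; a symmetric argument applied to $(DA)_{ik} = \sum_{(j,k) \in \Lambda} D_{ij} \alpha_{jk}$ shows $DA = 0$ iff $DB = 0$.

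Consequently the set $X = \{D \in M_n(S) : AD = 0 \text{ or } DA = 0\}$ coincides with the analogous set for $B$, and the open neighborhoods satisfy $N(A) = (X \cap V(\Gamma(M_n(S)))) \setminus \{A\}$ and $N(B) = (X \cap V(\Gamma(M_n(S)))) \setminus \{B\}$. In particular, $B \in N(A)$ iff $AB = 0$ or $BA = 0$, which by the same pattern observation reduces to $A^2 = 0$, equivalently $B^2 = 0$. A short case split then finishes the proof: if $A^2 \neq 0$, then neither $A$ nor $B$ lies in $X$, so $N(A) = X \cap V(\Gamma(M_n(S))) = N(B)$, giving the open-neighborhood version of the twin relation; if $A^2 = 0$, then both $A$ and $B$ lie in $X$, and $N[A] = N(A) \cup \{A\} = X \cap V(\Gamma(M_n(S))) = N(B) \cup \{B\} = N[B]$, giving the closed-neighborhood version. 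Either way, $A$ and $B$ are twins in $\Gamma(M_n(S))$.

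The only real subtlety is that one cannot simply apply Lemma \ref{pattwins} to both $A$ and $B$ (via the common pattern $P = \sum_{(i,j) \in \Lambda} E_{ij}$) and invoke transitivity, since being twins is not a transitive relation once the two flavors it permits ($N(\cdot)$ versus $N[\cdot]$) are mixed across pairs. Working directly at the level of $\Lambda$ from the outset makes the compatibility of the two cases automatic: the pattern argument already forces $A$ and $B$ to agree on whether or not they are adjacent to every other matrix (including each other), which is exactly what twin-ness demands.
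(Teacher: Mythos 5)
Your proof is correct, and it takes a genuinely more careful route than the paper's. The paper disposes of this lemma in two lines: by Lemma \ref{pattwins} the pairs $(A,\pat(A))$ and $(B,\pat(B))$ are twins, and since $\pat(A)=\pat(B)$ it concludes that $A$ and $B$ are twins --- precisely the transitivity step you flag as suspect. Your worry is legitimate in general, since twin-ness is a disjunction of two conditions ($N(\cdot)$ versus $N[\cdot]$) and is not transitive when the flavours differ across pairs; but in this particular situation the flavour is itself determined by the common support, because $A$ is adjacent to $\pat(A)$ iff $\pat(A)^2=0$ iff $B$ is adjacent to $\pat(B)$, so the two pairs are twins of the same kind and the paper's implicit transitivity does go through --- the paper simply does not say so. What you do instead is inline the support computation that the paper hides inside Lemma \ref{pattwins} (whose own proof is left to the reader as ``this follows directly''), apply it to the pair $(A,B)$ directly, and make the case split on $A^2=0$ explicit, obtaining $N(A)=N(B)$ in the non-adjacent case and $N[A]=N[B]$ in the adjacent case. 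Your approach buys rigor at exactly the point where the published argument is glib, at the cost of some length; both arguments are valid.
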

\begin{proof}
Denote $A=\sum_{(i,j) \in \Lambda}{\alpha_{ij}E_{ij}}$ and $B=\sum_{(i,j) \in \Lambda}{\beta_{ij}E_{ij}}$. By Lemma \ref{pattwins}, both $A$ and  $\pat(A)$, as well as  $B$ and  $\pat(B)$ are twins in $\Gamma(M_n(S))$. By definition though,  $\pat(A)=\pat(B)$, so $A$ and $B$ are also twins.
\end{proof}

\bigskip

Lemmas \ref{twins} and \ref{booleantwins} imply that for infinite entire antirings $S$, the metric dimension of $\Gamma(M_n(S))$ is infinite. Therefore, we shall limit ourselves to studying finite semirings. Note that any entire antiring with two elements is isomorphic to $\BB$, so we can also assume that $|S| \geq 3$.

\bigskip

\begin{Theorem}
\label{boolean}
Let $S$ be an entire finite antiring with $|S| \geq 3$ and $n \geq 2$. Then the following formula holds:
 $\MD(\Gamma(M_n(S)))=|S|^{n^2}-2^{n^2}-\sum\limits_{k=0}^{n}{(-1)^k{n \choose k}\left[\left(|S|^{n-k}-1 \right)^{n}-\left(2^{n-k}-1 \right)^{n}\right]}+\MD(\Gamma(M_n(\BB)))- 2(n-1).$
\end{Theorem}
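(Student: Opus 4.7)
The plan is to identify the twin structure of $\Gamma(M_n(S))$, apply a matching twin-based lower bound, and construct a resolving set of the claimed size. First I would strengthen Lemma \ref{booleantwins} to the $(I,J)$ level: for every $I, J \subsetneq N_n$ with $(I, J) \neq (\emptyset, \emptyset)$, any two matrices in $M_n(S)$ whose zero rows are exactly $I$ and whose zero columns are exactly $J$ are twins in $\Gamma(M_n(S))$. The proof mirrors Lemma \ref{twinsresolving} and uses only that $S$ is entire and antinegative: for any $C \in M_n(S)$, $CA = 0$ iff the nonzero columns of $C$ lie in $I$, and $AC = 0$ iff the nonzero rows of $C$ lie in $J$. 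Denote this twin class $T_{I,J}^S$, with cardinality $t_{I,J}^S$ given by the formula of Lemma \ref{numberoft} after replacing $2$ by $|S|$. Lemma \ref{twins} now forces every resolving set to contain at least $t_{I,J}^S - 1$ elements from each class, yielding $\MD(\Gamma(M_n(S))) \geq \sum_{(I,J)}(t_{I,J}^S - 1)$.

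For the matching upper bound I would choose one representative $A_{I,J}$ from each $T_{I,J}^S$ and set $W = (Z(M_n(S)) \setminus \{0\}) \setminus \{A_{I,J}\}$, of cardinality $\sum(t_{I,J}^S - 1)$. Two vertices in the same twin class are distinguished by the retained representatives of $W$. For two vertices $A, B$ in distinct classes $(I, J) \neq (I', J')$, their neighborhoods in $\Gamma(M_n(S))$ differ, and a short case analysis on whether $I \neq I'$ or $J \neq J'$ produces indices $j, l$ for which the single-entry matrix $E_{jl}$ is adjacent to exactly one of $A, B$. Since $|S| \geq 3$, every twin class, including the one containing $E_{jl}$, has at least $|S| - 1 \geq 2$ elements, so some scalar multiple $\alpha E_{jl}$ with $\alpha \neq 0$ lies in $W$; being a twin of $E_{jl}$ it inherits the same neighborhood and distinguishes $A$ from $B$.

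It remains to rewrite $\sum(t_{I,J}^S - 1)$ in the form stated. Writing $c$ for the number of twin classes, this sum equals $|Z(M_n(S)) \setminus \{0\}| - c$. Theorem \ref{metricboolean} applied to $\Gamma(M_n(\BB))$ (with Example \ref{twobytwo} handling $n = 2$) gives $c = |Z(M_n(\BB)) \setminus \{0\}| - (\MD(\Gamma(M_n(\BB))) - 2(n-1))$, hence $\sum(t_{I,J}^S - 1) = |Z(M_n(S)) \setminus \{0\}| - |Z(M_n(\BB)) \setminus \{0\}| + \MD(\Gamma(M_n(\BB))) - 2(n-1)$. Counting non-zero-divisors in $M_n(S)$ (matrices with no zero row and no zero column) by inclusion–exclusion produces $\sum_{k=0}^n (-1)^k\binom{n}{k}(|S|^{n-k}-1)^n$, and analogously for $M_n(\BB)$; subtracting the two yields the double sum in the theorem.

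The main obstacle is the upper-bound verification. Promoting twinhood from the same-pattern statement of Lemma \ref{booleantwins} to the $(I, J)$ level substantially enlarges the twin classes (for example, the classes with $|I| = |J| = n - 1$, singletons in $M_n(\BB)$, acquire $|S| - 1$ elements in $M_n(S)$), so only one representative can be discarded per (now much larger) class. One must check that this sparse $W$ still distinguishes every pair of distinct twin classes. This is exactly where the hypothesis $|S| \geq 3$ is essential — it prevents any twin class from being emptied by the removal, so a twin witness for every required distinction is always available in $W$.
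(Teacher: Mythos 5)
Your proposal is correct and reaches the theorem by essentially the same mechanism as the paper: a twin-class lower bound via Lemma \ref{twins}, matched by a resolving set that omits one vertex per twin class, with the hypothesis $|S|\geq 3$ used exactly where the paper uses it --- to guarantee that a twin witness survives the removal. The bookkeeping differs, though. The paper stratifies the twins in two layers, first the pattern classes of Lemma \ref{booleantwins} (forcing $|Z(M_n(S))|-|Z(M_n(\BB))|$ elements into any resolving set) and then the Boolean classes of Remark \ref{kolikojedvojckov} (forcing $\MD(\Gamma(M_n(\BB)))-2(n-1)$ more), and its resolving set is $Z(M_n(S))\setminus Z(M_n(\BB))$ together with the Boolean twin elements; you instead collapse both layers into the single observation that over any entire antiring all matrices with zero-row set $I$ and zero-column set $J$ are twins, and delete one representative per $(I,J)$-class. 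The counts agree --- your identity $c=|Z(M_n(\BB))\setminus\{0\}|-\left(\MD(\Gamma(M_n(\BB)))-2(n-1)\right)$ is precisely the content of Remark \ref{kolikojedvojckov} combined with Lemma \ref{tgredonmanj2}, since the classes with $|I|=n-1$ or $|J|=n-1$ are Boolean singletons --- and your distinguishing witness $\alpha E_{jl}$ (with $l\in I\setminus I'$ and $j\notin J'$) plays the same role as the paper's choice of $C\in W$ with $\pat(C)\in T_{\emptyset,N_n\setminus I_B}$. Your organization has the modest advantage of making explicit that for $|S|\geq 3$ the metric dimension equals the bare twin lower bound, with no analogue of the extra $2(n-1)$ vertices that the Boolean case requires.
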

\begin{proof}
Throughout this proof we shall (by a slight abuse of notation) consider that $\BB$ is a proper subsemiring in $S$.
Suppose $W$ is a resolving set for $\Gamma(M_n(S))$. Lemmas \ref{twins} and \ref{booleantwins} imply that for every $\Lambda \subseteq N_n \times N_n$ at most one zero-divisor matrix with its pattern of zero and non-zero entries prescribed by $\Lambda$ is not in $W$. This yields $| Z(M_n(S))| - | Z(M_n(\BB))|$ elements that have to be included in $W$. If $n \geq 3$, Remark \ref{kolikojedvojckov}, Theorem \ref{metricboolean} and Lemma \ref{pattwins} yield additional $\MD(\Gamma(M_n(\BB))) - 2(n-1)$ elements that have to be in $W$. If $n=2$, $\MD(\Gamma(M_2(\BB))) = 2(2-1)$, so in both cases $\MD(\Gamma(M_n(S))) \geq | Z(M_n(S))| - | Z(M_n(\BB))|+\MD(\Gamma(M_n(\BB))) - 2(n-1)$.

On the other hand, let $W = Z(M_n(S)) \setminus Z(M_n(\BB))$ and if $n \geq 3$, add to $W$ also  the $\MD(\Gamma(M_n(\BB)))- 2(n-1)$ elements from Remark \ref{kolikojedvojckov}. By construction, $|W| = | Z(M_n(S))| - | Z(M_n(\BB))| + \MD(\Gamma(M_n(\BB))) - 2(n-1)$ for every $n \geq 2$. Let us prove that $W$ is a resolving set. Choose arbitrary non-zero matrices $A \neq B \in Z(M_n(S))$. Now, if $A$ and $B$ have the same pattern, one of them is in $W$ by definition. Suppose therefore that $A$ and $B$ have different patterns. Remark \ref{kolikojedvojckov} now ensures that if matrices $\pat(A)$ and $\pat(B)$ belong to the same set $T_{I,J}$ for some $\emptyset \subseteq I,J \subsetneq N_n$, at least one of them is in $W$. So, let us assume that $\pat(A) \in T_{I_A,J_A}$ and $\pat(B) \in T_{I_B,J_B}$, where $I_A \neq I_B$ or $J_A \neq J_B$. Without loss of generailty (if necessary, swapping the roles of rows and columns, or matrices  $A$ and $B$ respectively), assume that $I_A \neq I_B$ and $|I_A| \leq |I_B|$. Since $|S| \geq 3$, at least one zero-divisor matrix with every pattern of its zero and non-zero entries is in $W$, so we can choose $C \in W$ such that $\pat(C) \in T_{\emptyset, N_n \setminus I_B}$. Therefore $\pat(C)$ is not a right zero-divisor, but $\pat(C)\pat(B)=0$, and since $I_B \nsubseteq I_A$ also $\pat(C)\pat(A) \neq 0$.
Lemma \ref{pattwins} now implies that $A$ and $B$ have different representations with respect to $W$.
This proves that $\MD(\Gamma(M_n(S))) \leq | Z(M_n(S))| - | Z(M_n(\BB))|+\MD(\Gamma(M_n(\BB)))-2(n-1)$.

Finally, observe that the zero divisors in $M_n(S)$ are exactly those matrices with at least one zero row or column, so with a similar argument as in the proof of Lemma \ref{numberoft}, we can prove that $| Z(M_n(S))|=|S|^{n^2}-\sum\limits_{k=0}^{n}{(-1)^k{n \choose k}\left(|S|^{n-k}-1 \right)^{n}}$. This yields that $| Z(M_n(S))|-| Z(M_n(\BB))|=|S|^{n^2}-2^{n^2}-\sum\limits_{k=0}^{n}{(-1)^k{n \choose k}\left[\left(|S|^{n-k}-1 \right)^{n}-\left(2^{n-k}-1 \right)^{n}\right]}$, which proves the assertion.
\end{proof}

\bigskip

%-----------------------------------------------------
%-----------------------------------------------------

\bigskip

\end{document}